\newcommand{\myjoin}{\begin{picture}(20,6)\put(3,3){\line(1,0){13
}}\end{picture}}
\newcommand{\al}{\alpha}
\newcommand{\be}{\beta}
\newcommand{\cP}{\mathcal{P}}
\newcommand{\cC}{\mathcal{C}}
\newcommand{\TT}{\mathbb{T}}
\begin{document}

\title{On Finite Dimensional Jacobian Algebras}

\author{Sonia Trepode       \and
        Yadira Valdivieso-D\'iaz 
}


\institute{S. Trepode \at
              Departamento de Matem\'atica, Facultad de Ciencias Exactas y Naturales, Funes 3350, Universidad Nacional de Mar del Plata, 7600 Mar del Plata, Argentina \\
              Tel.: +542234753150\\
              \email{strepode@mdp.edu.ar}           
           \and
           Y. Valdivieso-D\'iaz \at
               Departamento de Matem\'atica, Facultad de Ciencias Exactas y Naturales, Funes 3350, Universidad Nacional de Mar del Plata, 7600 Mar del Plata, Argentina\\
              Tel.: +542234753150\\
              \email{valdivieso@mdp.edu.ar}     
}

\maketitle
\date{Received: date / Accepted: date}
\begin{abstract}
We show that Jacobian algebras arising from a sphere with
$n$-punctures, with $n\geq5$, are finite dimensional algebras. We
consider also a family of cyclically oriented quivers and we prove that,
 for any primitive potential, the associated Jacobian algebra is
finite dimensional.
\keywords{Jacobian algebras \and closed surfaces \and cyclically oriented quivers}
\subclass{MSC 16G20 \and MSC 13F60 \and MSC 57M20 \and MSC 16T30}
\end{abstract}

\section{Introduction}

Let $k$ be an algebraically closed field.
A potential $W$ for a quiver $Q$ is, roughly speaking, a linear combination of
cyclic paths in the  complete path algebra $k\langle\langle Q\rangle\rangle$.
The Jacobian algebra $\cP(Q,W)$ associated to a quiver with a potential
$(Q,W)$ is  the quotient of the complete path algebra $k\langle\langle Q\rangle\rangle$ modulo the Jacobian
ideal $J(W)$. Here, $J(W)$ is the closure of the ideal of $k\langle\langle Q\rangle\rangle$
which is generated by the cyclic derivatives of $W$ with respect to the arrows
of  $Q$.

Quivers with potential were introduced in\cite{DWZ08} in order to construct additive
categorifications of cluster algebras with skew-symmetric exchange
matrix. For the just mentioned categorification it is crucial that
the potential for $Q$ be non-degenerate, i.e. that it can be mutated along
with the quiver arbitrarily, see \cite{DWZ08} for more details on quivers with
potentials.

In the same year, Fomin, Shapiro and Thurston gave  in \cite{FST08} a class of cluster algebras arising from ideal
triangulations of surfaces with marked points. More precisely, each triangulation $\TT$ of a surface with marked points
$(S,M)$ by tagged
arcs corresponds to a cluster and the corresponding exchange matrix is
conveniently coded into a quiver $Q(\TT)$. Later, a link between these papers was
established by Labardini-Fragoso in \cite{LF09}, he considered
surfaces with a non-empty boundary and he gave a potential $W(\TT)$
associated with an ideal triangulation $\TT$ such that its corresponding
Jacobian algebra is finite dimensional.

In the first part of this work, we study Jacobian algebras
associated with ideal triangulations of a sphere with $n\geq 5$ punctures. Our
main result is the following:

\begin{theorem}\label{TeoSphere}
Let $(S, M)$ be  a sphere with $n$-punctures, where
$n\geq 5$. For every ideal triangulation $\TT$ of $(S,
M)$, the Jacobian algebra $\cP(Q(\TT),W(\TT))$ is
finite dimensional.
\end{theorem}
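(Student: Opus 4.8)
\emph{Proof plan.}
The plan is to argue directly from Labardini-Fragoso's combinatorial description of the quiver with potential $(Q(\tau),S(\tau))$ attached to an ideal triangulation (see \cite{Labardini}). Recall that the vertices of $Q(\tau)$ are the arcs of $\tau$, that each triangle $T$ together with each pair of its sides meeting at a common endpoint contributes an arrow, and that $S(\tau)=\sum_{T}\omega_{T}-\sum_{p}c_{p}$, where $\omega_{T}$ is the $3$-cycle running around a triangle $T$ and $c_{p}$ is the cycle that winds once around a puncture $p$ (with the standard modifications when $T$ is self-folded or $p$ has small valence). Since $(\mathbb S,\mathbb M)$ is a closed surface, every marked point is a puncture, so each arrow of $Q(\tau)$ lies on exactly one cycle $\omega_{T}$ and on exactly one cycle $c_{p}$ (on the latter possibly with multiplicity two). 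Because $Q(\tau)$ is a finite quiver, it will be enough to produce an integer $N=N(\tau)$ such that every path of length at least $N$ in $Q(\tau)$ belongs to the Jacobian ideal $J(S(\tau))$; this forces $\mathcal P(A(\tau),S(\tau))$ to be finite dimensional.

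First I would record the Jacobian relations. If an arrow $a$ sits on the triangle cycle $\omega_{T}=abc$ and on the puncture cycle $c_{p}$, then $\partial_{a}S(\tau)=0$ identifies the length-two path $bc$ with (a sign times) the path of length $\mathrm{val}(p)-1$ obtained from $c_{p}$ by erasing the arrow $a$. Thus, working modulo $J(S(\tau))$, one may replace a short ``turn'' through a triangle by a long ``turn'' around a puncture and vice versa. With these moves the finite-dimensionality question becomes the question of whether a path is allowed to wind around punctures, and by how much.

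The core of the proof --- and the only place where the hypothesis $n\ge 5$ is really used --- is the assertion that a path of $Q(\tau)$ that winds around a puncture $p$ more than a bounded number of times (equivalently, by the relations above, a path containing too many nested triangle-turns around $p$) is zero in $\mathcal P(A(\tau),S(\tau))$; more precisely, that the cyclic paths supported near each puncture are nilpotent. I would establish this by repeatedly rewriting such a path with the triangle relations of the previous paragraph and following which arrows the successive terms can be supported on, until the result lands in the square of the relation ideal and hence vanishes. The combinatorics of the $n$-punctured sphere enter here: for $n\ge 5$ one has $|\tau|=3n-6\ge 9$ arcs, $2n-4\ge 6$ triangles and total puncture valence $6n-12$, which leaves enough of the surface outside any given puncture that the rewriting cannot ``close up'' into a cyclic path reproducing itself --- unlike the situation for $n\le 4$ (the tetrahedral triangulation of the four-punctured sphere, or the unavoidable valence-two triangulations of the three-punctured sphere), where no such bound exists and the Jacobian algebra need not be finite dimensional. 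The degenerate local pieces --- self-folded triangles and punctures of valence $1$ or $2$, where an arrow may occur twice on $c_{p}$ --- I would treat by a separate local computation, using that on the $n$-punctured sphere with $n\ge 5$ each such piece is surrounded by enough additional triangles for the main argument to still go through (alternatively, by first flipping $\tau$ to diminish the number of such pieces, since any two ideal triangulations are connected by flips).

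Once the winding lemma is in place the proof concludes easily: a path that is nonzero in $\mathcal P(A(\tau),S(\tau))$ cannot wind appreciably around any puncture, so after normalising it (say, trading all triangle-turns for puncture-turns) it meets each of the finitely many triangles of $\tau$ only boundedly often and hence has bounded length, giving the desired $N(\tau)$. I expect the main obstacle to be exactly the winding lemma --- proving rigorously that winding around a puncture too much is a zero relation, and pinning down precisely where $n\ge 5$ is indispensable --- with the sign bookkeeping in $S(\tau)$ and the self-folded and low-valence configurations as the principal secondary nuisances.
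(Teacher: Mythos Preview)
Your plan attacks every triangulation at once, but the paper does something much simpler. Since finite-dimensionality of Jacobian algebras is mutation-invariant (\cite[Corollary~6.6]{DWZ08}) and flips of ideal triangulations correspond to mutations of the associated QPs (\cite[Theorem~30]{Labardini}), it suffices to exhibit \emph{one} triangulation $\tau$ for which $\mathcal P(A(\tau),S(\tau))$ is finite dimensional. The paper picks a highly symmetric triangulation (Figure~\ref{esfera}), writes down its quiver and potential explicitly, and verifies by a direct and short computation (Lemmas~\ref{cases}--\ref{camino} and Remark~\ref{resumen}) that every path of length at least $2n+2$ lies in the Jacobian ideal. You mention flipping only as a fallback for self-folded configurations, whereas in fact reduction-by-flip is the paper's entire strategy, and it bypasses all of the uniform combinatorics you are setting up.

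Your direct approach, as written, has a real gap: the ``winding lemma'' you single out as the core of the argument \emph{is} the entire content, and you do not prove it --- you only describe what a proof might look like (``repeatedly rewriting \ldots\ until the result lands in the square of the relation ideal''). Making that rigorous for an arbitrary triangulation, with full control over the rewriting so that terms do not proliferate or reproduce themselves, is substantially harder than the paper's route; carrying it out is essentially the programme of \cite{LF12}, and it is not something that can be left as a sketch. Your heuristic for where $n\ge 5$ enters (the arc and triangle counts leave ``enough of the surface outside any given puncture'') is also not on target: the four-punctured sphere already yields a finite-dimensional Jacobian algebra (the tubular case treated in \cite{BG09}), so the restriction $n\ge 5$ here is an artifact of the method and of the cases already in the literature, not a threshold below which finite-dimensionality fails.
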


The case of a sphere with $4$ punctures was studied by  Barot and Geiss (in \cite{BG09}, Section 5), the algebra associated to this surface is a tubular cluster algebra.

In the theory of cluster algebras, primitive potentials, which are a
linear combination of all the oriented chordless cycles in a quiver $Q$,
appear in many contexts,  for example in cluster tilted algebras of
Dynkin type (\cite{DWZ08}, Section 9). Also,  it follows from
\cite{BT10} that cluster tilted algebras with cyclically oriented
quivers have a primitive potential (see definition of a cluster
tilted algebra in Section \ref{4}).

In the proof of our main Theorem \ref{TeoSphere}, we use a
particular ideal triangulation $\TT$ of a sphere with $n$-punctures such that the
quiver associated to $\TT$ is cyclically oriented but its associated potential
is not primitive.

In the second part of this work, we give a class of  cyclically oriented quivers
such that any  primitive potential induces a finite dimensional
Jacobian algebra.

The paper is organized as follows: In Section \ref{2}, we recall
some definitions of quivers with potentials, path algebras, Jacobian
algebras and ideal triangulations of surfaces. In Section \ref{3},
we prove that every Jacobian algebra associated with an ideal
triangulation of a sphere with $n\geq 5$ punctures are finite dimensional.
Finally, in Section \ref{4}, we give a combinatorial description
of a quiver $Q$ such that any of its primitive potentials induce a
finite dimensional Jacobian algebra.

\begin{remark}
While we were finishing this manuscript, we became aware of the recent
paper \cite{Lad12}, where Ladkani showed that Jacobian algebras of surfaces with an empty boundary and arbitrary
particular genus are
finite dimensional algebras.
\end{remark}


\section{Preliminaries}\label{2}


\subsection{Quivers and potentials}
In this subsection, we fix notations for path algebras and
complete path algebras, and recall basic definitions of quivers with potential (cf.
\cite{DWZ08}).

Let $Q$ be a finite quiver and $k$ be a field. We denote by $R$ the $k$-vector space
$k^{Q_{0}}$, by $A$  the $k$-vector space $k^{Q_1}$ and, for each
nonnegative integer $d$ by $A^d$ the $R$-bimodule
$\underbrace{A\otimes_R\dots\otimes_R A}_d$ .


With this notation,  the \textit{path algebra} of $Q$ is the $k$-algebra defined as the (graded) tensor algebra

$$k\langle Q \rangle=\displaystyle{\bigoplus_{d=0}^{\infty}A^d}$$

and the \textit{complete path algebra} of $Q$ is the $k$-vector space defined by

$$k\langle\langle Q \rangle\rangle=\displaystyle{\prod_{d=0}^{\infty}A^d}.$$

Also, $k\langle\langle Q\rangle\rangle$ is a topological $k$-algebra
with the $\mathfrak{m}$-adic  topology, where $\mathfrak{m}$ is the
ideal $\displaystyle{\prod_{d=1}^{\infty}A^d}$.

\begin{remark}\label{topology}
An important topological property of $k\langle\langle Q\rangle\rangle$  with the $\mathfrak{m}$-adic topology, for this work, is the following:

A sequence $(x_n)_{n\in \mathbb N}$ of elements of $k\langle\langle Q\rangle\rangle$ converges if and only if for every $d\geq 0$,
the sequence $(x^{(d)}_n)_{n\in \mathbb N}$ does, and $$\lim_{n\to\infty} x_n=\sum_{d\geq 0}\lim_{n\to\infty}x^{(d)}_n,$$
where $x^{(d)}_n$ denotes the degree-$d$ component of $x_n$.
\end{remark}

Notice that the elements of $k\langle\langle Q \rangle\rangle$ are
(possibly infinite) $k$-linear combinations of paths
in $Q$.

Denote by $k\langle\langle Q\rangle\rangle_{\textrm{cyc}}$ the
$k$-subspace of $k\langle\langle Q\rangle\rangle$ whose element are
$k$-linear combinations of cycles in $Q$.

\begin{definition}\cite[Definition 3.1]{DWZ08}
\begin{itemize}
\item A potential $W$ is any element of  the $k$-subspace $k\langle\langle Q\rangle\rangle_{\textrm{cyc}}$.
\item For every arrow $a$ in $Q_1$, we define the \textit{cyclic derivative} $\partial_{a}$ as
the continuous $k$-linear map $$k\langle\langle Q
\rangle\rangle_{\textrm{cyc}} \to k\langle\langle Q\rangle\rangle$$
acting on paths by

\begin{equation*}
\partial_{a}(a_1\cdots a_d)=\sum_{k=1}^{d}\delta_{a a_k}a_{k+1}\cdots a_d a_1\cdots a_{k-1}
\end{equation*}
\item The Jacobian ideal $J(W)$ of a potential $W$ is the closure
of the ideal $$\textrm{I}(W)=\langle \partial_{a}(W) \mid a\in Q_1\rangle $$
in $k\langle\langle Q\rangle\rangle$.

\item The Jacobian algebra $\cP(Q,W)$ is the quotient $k\langle\langle Q\rangle\rangle / J(W)$.
\end{itemize}
\end{definition}

\subsection{Triangulations of surfaces}

In this subsection, we review some facts concerning  triangulations  of surfaces (cf. \cite{FST08}).

\begin{definition}\cite[Definition 2.1]{FST08} A \textit{bordered surface with marked points} is a pair $(S, M)$, where  $S$ is a connected oriented
2-dimensional Riemann surface with a (possibly empty) boundary and
$M$ is a finite and non-empty set of points in $S$,
called \textit{marked points}, such that there is at least one
marked point on each connected component of the boundary of $S$.

The set $P$ of marked points in the interior of $S$ are called \textit{punctures}.
\end{definition}

In this paper, we study spheres with $n$ punctures, $n \geq 5$, due to the  following definitions, in order to avoid surfaces that cannot be
triangulated or there is only one triangulation,  we need to exclude:

\begin{itemize}
\item spheres with one or two punctures;
\item unpunctured or once-punctured monogons;
\item unpunctured digons; and
\item unpunctured triangles
\end{itemize}

\begin{definition}\cite[Definition 2.2 and 2.4]{FST08}
A (simple) \textit{arc} $\gamma$  in $(S, M)$ is a curve in $S$ such that:
\begin{itemize}
\item the endpoints of $\gamma$ are marked points in $M$;
\item $\gamma$ does not intersect itself, except that its endpoints may coincide;
\item $\gamma$ is not contractible into $ M$ or into the boundary of $ S$;
\item $\gamma$ does not cut out an unpunctured monogon or an unpunctured digon.
\end{itemize}

Two arcs are \textit{compatible} if there are arcs in their
respective isotopy  classes whose relative interiors do not
intersect.

An arc whose endpoints coincide is called a \textit{loop}.
\end{definition}

\begin{definition}\cite[Definition 2.6]{FST08}
An \textit{ideal triangulation} of $(S, M)$ is any
maximal  collection of pairwise compatible arcs whose relative
interiors do not intersect each other.

The arcs of the triangulation cut the surface $S$  into
\textit{ideal triangles}. The three sides of an ideal triangle do
not have to be distinct, i.e., we allow \textit{self-folded}
triangles.
\end{definition}

\begin{figure}[ht]
\centering
\begin{tikzpicture}
\filldraw [black] (0,0.9) circle (1pt)
                 (0,0) circle(1pt);
\draw  (0,0) to [out=45,in=350](0,1.5)
         (0,1.5) to [out=195,in=135](0,0);
\draw  (0,0) -- (0,0.9);
\draw (-.15, .5)node{$\gamma$};
\end{tikzpicture}
\caption{Self-folded ideal triangle}
\end{figure}
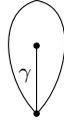

An easy calculation shows that any ideal triangulation of  a sphere with $n$ punctured consists of $3n$ arcs.


\section{Jacobian algebras arising from a sphere with $n$-punctures}\label{3}

The algebra arising from a sphere with punctures was studied for first time by Barot and Geiss in \cite{BG09}. They prove that the tubular
cluster algebra of type (2,2,2,2) corresponds to a sphere with
4-punctures (see definition of a cluster tilted algebra in Section \ref{4}). In this section, we study the Jacobian algebras arising from a sphere with
$n$-punctures, where $n\geq 5$.

It is well known that  finite-dimensionality of Jacobian algebra is preserved by mutations. Thus, in order to prove the Theorem \ref{TeoSphere}, it is
enough to prove that there exists an ideal triangulation $\TT$ such
that the Jacobian algebra  $\mathcal P(Q(\TT),W(\TT))$ is finite
dimensional. For that reason, we give a particular ideal triangulation with that property.

Consider the ideal triangulation $\TT$ and the quiver $Q(\TT)$ depicted in Figure \ref{carcaj}. For notational convenience we  label the punctures on the north and south poles with $p_{n+1}$ and $p_{n+2}$, so we will consider the sphere with $(n+2)$-punctures, where $n\geq 3$. The labels we have assigned to the arrows in Figure \ref{carcaj} will be kept throughout the paper.

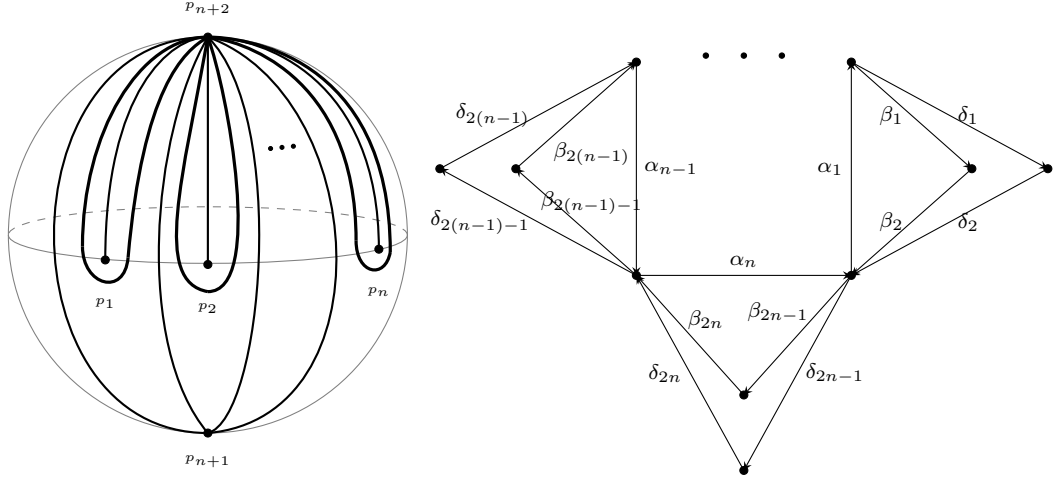
\begin{figure}[ht!]
\centering
\subfloat{
  \begin{tikzpicture}[scale=.75]
    \draw[gray] (3.5,3.5) circle (3.5cm);
    \draw[gray](0,3.5) arc (180:360: 3.5cm and 0.5cm);
    \draw[gray,thin,dashed] (7,3.5) arc (0:180: 3.5cm and 0.5cm);
    \draw[thick, black] (3.5,7) .. controls +(180:3.6cm) and +(180:3.6cm) .. (3.5,0);
    \draw[thick, black] (3.5,7) .. controls +(180:.65cm) and +(90:3cm) .. (1.7,3.06);
    \draw[thick, black] (3.5,7) .. controls +(0:1cm) and +(90:3cm) .. (6.5,3.25);
    \draw[thick, black] (3.5,7) .. controls +(-90:1cm) and +(90:2cm) .. (3.5,3.01);
    \draw[thick, black] (3.5,7) .. controls +(-20:3cm) and +(5:3.2cm) .. (3.5,0);
  \draw[thick, black] (3.5,7) .. controls +(-120:2cm) and +(130:2cm) .. (3.5,0);
  \draw[thick, black] (3.5,7) .. controls +(-45:2cm) and +(15:1cm) .. (3.5,0);
    \draw[very thick, black] (3.5,7) .. controls +(0:.6cm) and +(90:3.5cm).. (6.7,3.2);
    \draw[very thick, black] (6.7,3.2) .. controls +(260:.5cm) and +(-90:.6cm).. (6.1,3.4);
    \draw[very thick, black] (6.1,3.4) .. controls +(90:3cm) and +(-5:.5cm).. (3.5,7);

    \draw[very thick, black] (3.5,7) .. controls +(180:1.2cm) and +(90:2cm).. (1.3,3.3);
    \draw[very thick, black] (1.3,3.3) .. controls +(270:.7cm) and +(-95:.7cm).. (2.1,3.1);
    \draw[very thick, black] (2.1,3.1) .. controls +(85:3cm) and +(-150:.5cm).. (3.5,7);
    %
    \draw[very thick, black] (3.5,7) .. controls +(-70:1cm) and +(0:1cm).. (3.5,2.5);
    \draw[very thick, black] (3.5,7) .. controls +(-100:2.5cm) and +(170:1cm).. (3.5,2.5);
    \filldraw [black] (3.5,7) circle (2pt)
                  (3.5,0) circle (2pt)
                  (1.7,3.06) circle(2pt)
                  (6.5,3.25) circle(2pt)
                  (3.5,2.98) circle(2pt);
\filldraw(4.6,5.03)circle(1pt)
        (4.8,5.05)circle(1pt)
        (5,5.07)circle(1pt);
\draw (3.5,7.5) node {\tiny$p_{n+2}$};
\draw (3.5,-.5) node {\tiny$p_{n+1}$};
\draw (1.7,2.3) node {\tiny$p_{1}$};
\draw (3.5,2.2) node {\tiny$p_{2}$};
\draw (6.5,2.5) node {\tiny$p_{n}$};
    \end{tikzpicture}
}
\subfloat{
\begin{tikzpicture}
  \newdimen\R
  \R=2cm
\draw[fill] (45:\R) circle (1.5pt)
            (135:\R) circle (1.5pt)
            (225:\R) circle (1.5pt)
            (315:\R) circle (1.5pt)
            (0:3) circle (1.5pt)
            (0:4) circle (1.5pt)
            (180:3) circle (1.5pt)
            (180:4) circle (1.5pt)
            (270:3) circle (1.5pt)
            (270:4) circle (1.5pt);
\path[-stealth]
(315:\R)edge node[left]{$\alpha_1$} (45:\R)
(135:\R)edge node[right]{$\alpha_{n-1}$} (225:\R)
(225:\R)edge node[above]{$\alpha_{n}$} (315:\R)
(45:\R)edge node[left]{\small$\beta_1$} (0:3)
(45:\R)edge node[right]{\small$\delta_1$} (0:4)
(0:3)edge node[left]{\small$\beta_2$} (315:\R)
(0:4)edge node[right]{\small$\delta_2$} (315:\R)
(315:\R)edge  (270:3)
(315:\R)edge node[right]{\small$\delta_{2n-1}$} (270:4)
(270:3)edge  (225:\R)
(270:4)edge node[left]{\small$\delta_{2n}$} (225:\R)
(225:\R)edge (180:3)
(225:\R)edge node[left]{\small$\delta_{2(n-1)-1}$} (180:4)
(180:3)edge  (135:\R)
(180:4)edge node[left]{\small$\delta_{2(n-1)}$} (135:\R);
\draw(-2,-0.45) node{\small$\beta_{2(n-1)-1}$};
\draw(-2,0.2) node {$\beta_{2(n-1)}$};
\draw(-0.5,-2) node{$\beta_{2n}$};
\draw(0.45,-1.9) node{$\beta_{2n-1}$};
\filldraw(-0.5,1.5)circle(1pt)
        (0,1.5)circle(1pt)
        (0.5,1.5)circle(1pt);
\end{tikzpicture}
}
\caption{Quiver associated with the ideal triangulation $\TT$}
\label{carcaj}
\end{figure}

For each puncture $p_i\in P$ in the sphere, we choose a non-zero scalar
$x_i\in k$. By \cite[Definition 23]{LF09}, the potential
$W(\TT)$ associated with the ideal triangulation $\TT$ and  according to the label in the arrows of the
quiver in Figure \ref{carcaj} is:

\begin{eqnarray*}
&W(\TT)=x_{n+1}\alpha_1\dots\alpha_n +x_{n+2}\delta_{2n-1}\delta_{2n}\dots\delta_1\delta_2\\
&+ \sum_{i=1}^{n}\alpha_{i}\beta_{2i-1}\beta_{2i}+\sum_{i=1}^n(-{x_i}^{-1})\alpha_i\delta_{2i-1}\delta_{2i}
\end{eqnarray*}

Before we prove Theorem \ref{TeoSphere}, we establish some useful identities in the Jacobian
algebra $\cP P(Q(\TT),W(\TT))$.

\begin{lemma}\label{cases}
The following identities hold in the Jacobian algebra $\cP(Q(\TT),W(\TT))$:
\begin{eqnarray}
\beta_{2i-1}\beta_{2i}\delta_{2(i-1)-1}\delta_{2(i-1)}&=&x_{i-1}\beta_{2i-1}\beta_{2(i-1)}\beta_{2(i-1)-1}\beta_{2(i-1)}\label{ecua1}\\
&=&(x_{i-1}/x_{i})\delta_{2i-1}\delta_{2i}\beta_{2(i-1)-1}\beta_{2(i-1)}\label{ecua2}\\
&=&x_{i}^{-1}\delta_{2i-1}\delta_{2i}\delta_{2(i-1)-1}\delta_{2(i-1)}\label{ecua3}
\end{eqnarray}
for every $i=1, \dots, n$
\end{lemma}
\begin{proof}

Let $\Lambda$ be the Jacobian algebra $\cP(Q(\TT), W(\TT))$.
Since $$\partial_{\alpha_{i-1}}(W(\TT))=\beta_{2(i-1)-1}\beta_{2(i-1)}+x_{n+1}\alpha_{i}\dots\alpha_{i-2}-x_{i-1}^{-1}\delta_{2(i-1)-1}\delta_{2(i-1)},$$
 then in $\Lambda$ we have the identity
$$\beta_{2i-1}\beta_{2i}\delta_{2(i-1)-1}\delta_{2(i-1)}=x_{n+1}x_{i-1}\beta_{2i-1}\beta_{2i}\alpha_{i}\dots\alpha_{i-2} $$ $$+x_{i-1}\beta_{2i-1}\beta_{2i}\beta_{2(i-1)-1}\beta_{2(i-1)}.$$

Observe that $\partial_{\beta_{2i-1}}(W(\TT))= \beta_{2i}\alpha_{i}$, then the first term on the right hand is in the Jacobian ideal, therefore

$$\beta_{2i-1}\beta_{2i}\delta_{2(i-1)-1}\delta_{2(i-1)}=
x_{i-1}\beta_{2i-1}\beta_{2i}\beta_{2(i-1)}\beta_{2(i-1)-1}\beta_{2(i-1)}$$


This establishes the first identity. The second identity can be
proved in a similar fashion and is left to the reader. Let us show
the third identity. By the relations induced by
$\partial_{\alpha_{i-1}}(W(\TT))$, we have:
$$\beta_{2i-1}\beta_{2i}\delta_{2(i-1)-1}\delta_{2(i-1)}=-x_{n+1}\alpha_{i+1}\dots\alpha_{i-1}\delta_{2(i-1)-1}\delta_{2(i-1)}$$
$$+x_{i-1}^{-1}\delta_{2i-1}\delta_{2i}\delta_{2(i-1)-1}\delta_{2(i-1)}$$

Denote by  $\rho$ the term
$\alpha_{i+1}\dots\alpha_{i-2}\alpha_{i-1}\delta_{2(i-1)-1}\delta_{2(i-1)}$.
Notice that $\rho$ is a path of length $n+1$.  We claim that $\rho$
is in the Jacobian ideal.

Using $\partial_{\delta_{2(i-1)}}(W(\TT))$, we have the following identity $$\alpha_{i-1}\delta_{2(i-1)-1}={x_{i-1}}x_{n+2}\delta_{2(i-2)-1}
\delta_{2(i-2)}\dots\delta_1\delta_2$$.
Then, replacing $\alpha_{i-1}\delta_{2(i-1)-1}$ in $\rho$, we have
$$\rho=x_{n+2}x_{i-1}\alpha_{i+1}\dots\alpha_{i-2}\delta_{2(i-2)-1}
\delta_{2(i-2)}\dots\delta_1\delta_2 \delta_{2(i-1)}.$$
Observe that $\rho$ is a path of length $3n$ in $\Lambda$.

Replacing $\delta_{2(i-2)-1}\delta_{2(i-2)}$ by the relation induced
by $\partial_{\alpha_{i-2}}(W(\TT))$, the path $\rho$ is a path of
length $4n-3$. Iterating this process and using the topology of the
Jacobian algebra (see  Remark \ref{topology}), we have that $\rho$
is in the Jacobian ideal.

Then, $\beta_{2i-1}\beta_{2i}\delta_{2(i-1)-1}\delta_{2(i-1)}=x_{i-1}^{-1}\delta_{2i-1}\delta_{2i}\delta_{2(i-1)-1}\delta_{2(i-1)}$.
\end{proof}

\begin{lemma}\label{cases2}
The following identities hold in the Jacobian algebra $\mathcal P(Q(\TT),W(\TT))$.
\begin{eqnarray}
\alpha_i\delta_{2i-1}\delta_{2i}&=&x_{i}x_{i-1}\delta_{2(i-1)-1}\delta_{2(i-1)}\alpha_{i-1}\\
 \alpha_{i}\alpha_{i+1}\delta_{2(i+1)-1}&=&\delta_{2(i-1)}\alpha_{i-1}\alpha_{i}=0
\end{eqnarray}
for every $i=1, \dots, n$
\end{lemma}

\begin{proof}
The identity (4) follows as the first two identities in Lemma \ref{cases}. We proof the second one.

Notice that
$$\partial_{\delta_{2(i+1)}}(W(\TT))=-x_{i+1}^{-1}\alpha_{(i+1)}\delta_{2(i+1)-1}
+x_{n+2}\delta_{2(i)-1}\delta_{2i}\dots\delta_{2(i+1)-1},$$ then we have the identity

$$\alpha_{i}\alpha_{i+1}\delta_{2(i+1)-1}=
x_{i+1}x_{n+2}\alpha_{i}\delta_{2i-1}\delta_{2i}\dots\delta_{2(i+1)-1}\delta_{2(i+1)}.$$

Let $\rho$ be the path
$\alpha_{i}\delta_{2i-1}\delta_{2i}\dots\delta_{2(i+1)-1}\delta_{2(i+1)}$
By the identity (3) in Lemma \ref{cases} we have that $\rho$ is
equal to

$$\alpha_{i}
\beta_{2(i)-1}\beta_{2i}\delta_{2(i-1)-1}\dots\delta_{2(i+1)},$$
which is in the Jacobian ideal because it contains a factor $\alpha_1\be_{2i-1}=\partial_{\be_{2i}}(W(\TT))$.

Then $\alpha_{i}\alpha_{i+1}\delta_{2(i+1)-1}=0$ in $\cP(Q(\TT),W(\TT))$.
\end{proof}

\begin{lemma}\label{camino}
Let $\rho$ be a non zero path of length $5$ that starts in $\al_i$
or $\delta_{2i-1}$. If $\rho$ is not involving any arrow $\beta_i$
for $i=1, \dots, 2n$, then $rho$ is either the path
$\al_i\dots\al_{i+4}$ or
$\delta_{2i-1}\delta_{2i}\delta_{2(i-1)-1}\delta_{2(i-1)}$ in
$\cP(Q(\TT), W(\TT))$.
\end{lemma}

\begin{proof}
First suppose $\rho$  starts with $\delta_{2i-1}$. Then $\rho$ is one of the following:

\begin{itemize}
\item $\delta_{2i-1}\delta_{2i}\delta_{2(i-1)-1}\delta_{2(i-1)}\delta_{2(i-2)-1}$ or
 \item
$\delta_{2i-1}\delta_{2i}\al_{i}\delta_{2i-1}\delta_{2i}=x_ix_{i-1} \delta_{2i-1}\delta_{2i}\delta_{2(i-1)-1}\delta_{2(i-1)}\al_{i-1}$ or
\item $\delta_{2i-1}\delta_{2i}\al_{i}\alpha_{i+1}x$, where $x=\alpha_{i+2}$ o $\delta_{2(i+1)-1}$
\end{itemize}

By Lemma \ref{cases}, the second option is zero, and by Lemma
\ref{cases2}, the third option is zero. Then
$\rho=\delta_{2i-1}\delta_{2i}\delta_{2(i-1)-1}\delta_{2(i-1)}\delta_{2(i-2)-1}$

Now suppose $\rho$ starts with $\al_i$, then $\rho$ is one of the following:

\begin{itemize}
\item $\al_{i}\delta_{2i-1}\delta_{2i}\al_{i}\delta_{2i-1}$ or
\item $\al_{i}\delta_{2i-1}\delta_{2i}\delta_{2(i-1)-1}\delta_{2(i-1)}$
\item $\al_i\dots\al_{i+4}$
\end{itemize}

By the first part of  Lemma \ref{cases2} the first one is equal to
$$x_{i}x_{i-1}\delta_{2(i-1)-1}\delta_{2(i-1)}\alpha_{i-1}\al_{i}\delta_{2i-1},$$
and by the second part of the same Lemma, that path is zero.
Finally, the path
$\al_{i}\delta_{2i-1}\delta_{2i}\delta_{2(i-1)-1}\delta_{2(i-1)}$ by
Lemma \ref{cases} is  equal to
$$x_i\al_i\beta_{2i-1}\beta_{2i}\delta_{2(i-1)-1}\delta_{2(i-1)},$$
there force this path is also zero. Then $\rho=\al_i\dots\al_{i+4}$
\end{proof}


\begin{remark}\label{resumen}
Notice that, since
$\alpha_i\beta_{2i-1}=\partial_{\beta_{2i}}(W(\TT))$ and
$\be_{2i}\al_i=\partial_{\be_{2i-1}}(W(\TT))$, every path containing
either of the path $\alpha_i\beta_{2i-1}$ or $\be_{2i}\al_i$ as a
factor  is zero in $\cP(Q(\TT),W(\TT))$. Moreover, every path $\rho$
such that there exists at least a $\alpha_i$ and at least a
$\beta_j$, for $1\leq i \leq n$ and $1\leq j\leq 2n$, is zero. The
last assertion follows from applying repeatedly the first part of
Lemma \ref{cases2} until we obtain a factor $\alpha_i\beta_{2i-1}$
or $\be_{2i}\al_i$.
\end{remark}

Now, we can prove our main result.

\begin{proof}[Proof of Theorem \ref{TeoSphere}]
Since finite-dimensionality of Jacobian algebras is invariant under
mutations (cf. \cite[Corollary 6.6]{DWZ08}) and flips of ideal
triangulations are compatible with mutations of quivers with
potentials (cf. \cite[Theorem 30]{LF09}), it is enough to show that
$\cP(Q(\TT),W(\TT))$ is finite dimensional, where $\TT$ is the
triangulation in Figure \ref{carcaj}. We shall prove that every path
of length at least $2n+2$ belongs to the Jacobian ideal $J(W(\TT))$.

Let $\rho$ be a path of length at least $2n+2$. Without loss of
generality we can assume that $\rho$ starts with $\beta_{2n-1}$ or
$\delta_{2n-1}$. Denote by $Q_\rho$ the set of arrows of the path
$\rho$. By Remark \ref{resumen}, it is enough to analyze when
$Q_\rho \subset Q(\TT)_1\setminus\{\al_1, \dots, \al_n\}$ or
$Q_\rho\subset Q(\TT)_1\setminus\{\be_1, \dots, \be_{2n}\}$.

Consider the first case. Without loss of generality we can assume
that $\rho$ starts with $\beta_{2n-1}$ or $\delta_{2n-1}$. Then by
Lemma \ref{cases}, $$\rho=x\delta_{2n-1}\delta_{2n}\dots
\delta_1\delta_2\delta_{2n-1}\delta_{2n}\rho',$$  where $\rho'$ is
the rest of the path $\rho$ and $x\in k$ is the product certain
scalars $x_{j}$, because we can always change a factor
$\be_{2i-1}\be_{2i}\delta_{2(i-1)-1}\delta_{2(i-1)}$ or
$\be_{2i-1}\be_{2i}\be_{2(i-1)-1}\be_{2(i-1)}$ or
$\delta_{2i-1}\delta_{2i}\be_{2(i-1)-1}\be_{2(i-1)}$ by
$\delta_{2i-1}\delta_{2i}\delta_{2(i-1)-1}\delta_{2(i-1)}$.

Hence, by the relation induced of the partial derivative
$$\partial_{\delta_2}(W(\TT))=x_{n+2}\delta_{2n-1}\delta_{2n}\dots\delta_1 +x_1\alpha_1\delta_1,$$
we have that $$\rho=-xx_1\alpha_1\delta_1\delta_2\delta_{2n-1}\delta_{2n}\rho'.$$

Then  by Lemma \ref{cases} the path $\rho$ is zero in $\cP(Q(\TT),W(\TT))$.

Now suppose $Q_\rho\subset Q(\TT)_1\setminus\{\be_1, \dots,
\be_{2n}\}$. By Lemma \ref{camino}, the only non zero factor of
length 5 is $\al_i\dots\al_{i+4}$ or
$\delta_{2i-1}\dots\delta_{2(i-2)-1}$. Then we can assume that
$\rho=\delta_{2n-1}\delta_{2n}\dots
\delta_1\delta_2\delta_{2n-1}\delta_{2n}\rho'$ or
$\rho=\al_1\dots\al_n\al_1\dots\al_n\rho'$. But we have already
proof that the first option is a zero path. The second one using the
relation induced by $\partial_{\al_1}(W(\TT))$, we have:
$$\rho=x_{n+1}^2x_{1}^2\al_1\delta_1\delta_2\al_1\delta_1\delta_2\rho'$$
 that it is zero by Lemma \ref{cases2}.
\end{proof}

\section{Potentials in a class of cyclically oriented Quivers}\label{4}

In this section, we construct finite dimensional Jacobian algebras from quivers with certain combinatorial characteristics.

First we recall the definition of primitive potential (c.f \cite{DWZ08}, Section 9) and cyclically oriented quivers.

\begin{definition}(\cite{BT10}, Definition 3.1)
A \emph{walk of length $p$} in a quiver $Q$ is a $(2p+1)$-tuple
$$
w=(x_p,\alpha_p,x_{p-1},\alpha_{p-1},\ldots,x_1,\alpha_1,x_0)
$$
such that for all $i$ we have $x_i\in Q_0$, $\alpha\in Q_1$ and
$\{s(\alpha_i),e(\alpha_i)\}=\{x_p,x_{p-1}\}$. The walk $w$ is
\emph{oriented} if either $s(\alpha_i)=x_{p-1}$ and
$e(\alpha_i)=x_p$ for all $i$ or $s(\alpha_i)=x_{p}$ and
$e(\alpha_i)=x_{p-1}$ for all $i$. Furthermore, $w$ is called a
\emph{cycle} if $x_0=x_p$. A cycle of length $1$ is called a
\emph{loop}. We often omit the vertices and abbreviate $w$ by
$\alpha_p\cdots\alpha_1$. An oriented walk is also called
\emph{path}.

A cycle $c=(x_p,\alpha_p,\ldots,x_1,\alpha_1,x_p)$ is called
\emph{non-intersecting} if its vertices $x_1,\ldots,x_p$ are
pairwise distinct. A non-intersecting cycle of length $2$ is called
$2$-cycle. If $c$ is a non-intersecting cycle then any arrow
$\beta\in Q\setminus\{\alpha_1,\ldots,\alpha_p\}$ with
$\{s(\beta),e(\beta)\}\subseteq\{x_1,\ldots,x_p\}$ is called a
\emph{chord} of $c$. A cycle $c$ is called \emph{chordless} if it is
non-intersecting and there is no chord of $c$.

A quiver $Q$ without loop and $2$-cycle is call \emph{cyclically
oriented} if each chordless cycle is oriented. Note that this
implies that there are no multiple arrows in $Q$. A quiver without
oriented cycle is called \emph{acyclic} and an algebra whose quiver
is acyclic is called \emph{triangular}.
\end{definition}

\begin{definition}
Let $Q$ be a quiver. A \emph{primitive potential} $S$ is a lineal
combination of every oriented chordless cycle in $Q$ with non-zero
scalars.
\end{definition}

Buan, Marsh, Reineke,  Rieten and Todorov introduced in
\cite{BMRRT06} a cluster category $\cC_A$ associated to a hereditary
algebra $A$ and  proved that $\cC_A$ is endowed with a
cluster-tilting object. The endomorphism algebra of a
cluster-tilting object is called \emph{cluster tilted algebra} and
it was proven in \cite{Ke2011} by Keller that any cluster tilted
algebra is a Jacobian algebra.

Barot and Trepode in \cite{BT10} given an explicit description of the minimal
relations in cluster tilted algebras with cyclically oriented quivers, and
it follows from this result that the potential associated with this
kind of algebras is primitive.

Amiot in \cite{Ami09} introduced a cluster category $\cC_{(Q,W)}$
associated to a quiver with potential $(Q,W)$, and she proved that
when the Jacobian algebra $\cP(Q,W)$ is finite dimensional, the
category $\cC_{(Q,W)}$ is endowed with a cluster-tilting object
whose endomorphism algebra is isomorphic to $\cP(Q,W)$. In these
context, the endomorphism algebra of a cluster-tilting object is
called \emph{2-Calabi-Yau tilted algebra}.

Observe that the quiver $Q(\TT)$ in Figure \ref{carcaj} is
cyclically oriented however the potential $W(\TT)$ is not primitive,
showing that the previous result does not extend to Jacobian
algebras. Moreover it is easy to proof that  the Jacobian Algebra
$\mathcal P(Q(\TT),W)$ is not finite dimensional for any $W$ a
primitive potential.

\begin{proposition}
Let $W=\sum_{i=1}^n
y_i\al_i\be_{2i-1}\be_{2i}+\sum_{i=1}^nz_i\al_i\gamma_{2i-1}\gamma_{2i}+
w_i\al_1\dots\al_n$ be a primitive potential, where $y_i, z_i,
w_i\in k$. Then the Jacobian algebra $\mathcal P(Q(\TT), W)$ is not
finite dimensional.
\end{proposition}

\begin{proof}
As we mention before, finite-dimensionality of Jacobian algebras is
invariant under mutations (cf. \cite[Corollary 6.6]{DWZ08}). Then it
is enough to find a  mutation $\mu_\alpha$ of the quiver with a
primitive potential $(Q(\TT),W)$ such that $\mathcal
P(\mu_\alpha(Q(\TT),W))$ is not a finite dimensional algebra.

For instance, consider the mutation $\mu_{\alpha_1}(Q(\TT),W)=
(\mu_{\alpha_1}Q(\TT), \mu_{\alpha_1} W)$.  An easy calculation show that
$\mu_{\alpha_1} W= \sum_{i=2}^ny_i\al_i\be_{2i-1}\be_{2i}+\sum_{i=2}^n
z_i\al_i\gamma_{2i-1}\gamma_{2i}$ and $\mu_a(Q)$ has the quiver of
Figure \ref{carcaj2}.
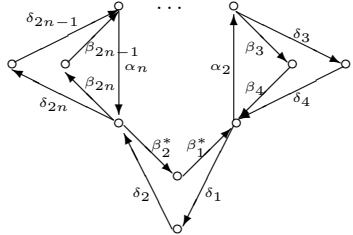
\begin{figure}[ht!]
\begin{center}
\begin{picture}(125,86)
\put(40,40){\circle{3}}
\put(84,40){\circle{3}}
\multiput(40,84)(43,0){2}{\circle{3}}
\put(0,62){\circle{3}}
\multiput(20,62)(85,0){2}{\circle{3}}
\put(125,62){\circle{3}}
\multiput(62,0)(0,20){2}{\circle{3}}
\put(83,41){\vector(0,1){40}}
\put(84,82){\vector(2,-1){39}}
\put(84,82){\vector(1,-1){19}}
\put(124,61){\vector(-2,-1){39}}
\put(104,61){\vector(-1,-1){19}}
\put(64,20){\vector(1,1){18}}
\put(83,39){\vector(-1,-2){19}}
\put(42,39){\vector(1,-1){18}}
\put(60,0){\vector(-1,2){18}}
\put(40,82){\vector(0,-1){39}}
\put(38,41){\vector(-1,1){18}}
\put(38,41){\vector(-2,1){38}}
\put(1,63){\vector(2,1){39}}
\put(21,63){\vector(1,1){19}}
\put(65,30){\tiny$\beta^*_1$}
\put(52,30){\tiny$\beta^*_2$}
\put(74,60){\tiny$\alpha_2$}
\put(42,60){\tiny$\alpha_n$}
\put(87,67){\tiny$\beta_3$}
\put(87,52){\tiny$\beta_4$}
\put(27,67){\tiny$\beta_{2n-1}$}
\put(27,54){\tiny$\beta_{2n}$}
\put(72,12){\tiny$\delta_1$}
\put(45,12){\tiny$\delta_2$}
\put(105,48){\tiny$\delta_4$}
\put(105,72){\tiny$\delta_3$}
\put(10,46){\tiny$\delta_{2n}$}
\put(5,78){\tiny$\delta_{2n-1}$}
\put(54,83){$\dots$}
\end{picture}
\end{center}
\caption{Quiver of $\mu_{\alpha_1}(Q(\TT),W)$}
\label{carcaj2}
\end{figure}

Notice that the arrows $\be^*_2,\be^*_1,\delta_1,\delta_2$ are not
in the Jacobian ideal, then the cycle
$(\be^*_2\be^*_1\delta_1\delta_2)^j$ is not zero for any $j\in
\mathbb N$, then  the Jacobian algebra $\mathcal
P(\mu_{\alpha_1}(Q(\TT),W))$ is not finite dimensional, therefore
$\mathcal P(Q(\TT),W)$ either.
\end{proof}

\begin{definition}(\cite{BT10}, Definition 3.3)
 A path $\gamma$ which is anti-parallel to an arrow $\eta$ in a quiver $Q$ is a \emph{shortest path} if
the full subquiver generated by the induced oriented cycle $\eta\gamma$ is chordless.
A path $\gamma=(x_0\xrightarrow{\gamma_1} x_1 \xrightarrow x_2 \rightarrow \cdots
   \rightarrow x_L)$ is called \emph{shortest directed path} if there exists no arrow $x_i\rightarrow x_j$ in $Q$ with $1\leq i+1<j\leq L$.
A walk $\gamma=(x_0\myjoin x_1 \myjoin x_2 \myjoin \cdots \myjoin
x_L)$ is called a \emph{shortest walk} if there is no edge joining
$x_i$ with $x_j$ with $1\leq i+1<j\leq L$ and $(i,j)\neq(0,L)$ (we
write a horizontal line to indicate an arrow oriented in one of the
two possible ways).
\end{definition}

\begin{definition}
Let $Q$ be a cyclically oriented quiver such that for any arrow
$\alpha$ there are at most 2 anti-parallel shortest path to $\alpha$
and $c=\beta_0\beta_1\dots\beta_L$ a oriented chordless cyclic. We
construct a sequence of triples $(\alpha_n, \rho_n, \rho_n')\in
Q_0\times (Q_1\cup\{0\})\times Q_1$ for each $n\in \mathbb N\cup
\{0\}$, such that $\rho_n$ and $\rho_n'$ are anti-parallel shortest
paths to $\alpha_n$ or $\rho_n=0$ and $\rho_n'$ is the anti-parallel
shortest path to $\alpha_n$ if $\alpha_n$ has only one anti-parallel
shortest path, in the following way:

\begin{itemize}
\item[\textbf{Step 0}] We denote by $\al_0$ to the arrow $\beta_0$ and  by $\rho_0'$ the anti-parallel shortest path $\beta_1\dots\beta_L$ to the arrow $\al_0$ in $c$. If there exists an anti-parallel shortest path $\rho_0$ to $\al_0$ different to $\rho$, then the first element of the sequence is $(\alpha_0, \rho_0, \rho_0')$. Otherwise, the sequence is constant to the element $(\al_0,0, \rho_0')$.

\item[\textbf{Step 1}] We denote by $\al_1$ the arrow in the path $\rho$ such that $t(\al_0)=s(\al_{1})$ and by $\rho'_1$ the anti-parallel shortest path to $\alpha_1$ in $\al_0\rho_0$. If there exists an anti-parallel shortest path $\rho_1$ to $\al_1$ different to $\rho'_1$, then the second element of the sequence is $(\alpha_1, \rho_1, \rho_1')$. Otherwise,  $(\al_n,\rho_n, \rho_n')=(\al_1,0, \rho_1')$ for each $n\geq 1$.

\item[\textbf{Step 2}] We denote by $\al_2$ the arrow in the path $\rho_1$ such that $s(\al_1)=t(\al_{2})$ and by $\rho'_2$ the anti-parallel shortest path to $\alpha_2$ in $\alpha_1\rho_1$. If there exists an anti-parallel shortest path $\rho_2$ to $\al_2$ different to $\rho'_2$, then the third element of the sequence is $(\alpha_2, \rho_2, \rho_2')$. Otherwise,  $(\al_n,\rho_n, \rho_n')=(\al_2,0, \rho_2')$ for each $n\geq 2$.

\item[] \hspace{4cm} $\vdots$
\item[\textbf{Step i}] We denote by $\al_i$ the arrow in the path $\rho_{i-1}$ such that
    \begin{itemize}
    \item $t(\be_i)=s(\be_{i+1})$ if $i$ is even or;
    \item $s(\be_i)=t(\be_{i+1})$ if $i$ is odd.
    \end{itemize}
and by $\rho'_i$ the anti-parallel shortest path to $\alpha_i$ in
$\alpha_{i-1}\rho_{i-1}$. If there exists an anti-parallel shortest
path $\rho_i$ to $\al_i$ different to $\rho'_i$, then the element
$i+1$ of the sequence is $(\alpha_i, \rho_i, \rho_i')$. Otherwise,
$(\al_n,\rho_n, \rho_n)=(\al_{i},0,  \rho_{i}')$ for each $n\geq i$.
\end{itemize}
The sequence $\{(\al_n, \rho_n, \rho_n')\}_{n\in \mathbb N\cup
\{0\}}$ is called \emph{cyclic sequence} of $c$. We say that the
cyclic sequence $\{(\al_n, \rho_n, \rho_n')\}_{n\in \mathbb N\cup
\{0\}}$ is \emph{finite} if there exists $m\in\mathbb N$ such that
$(\alpha_n, \rho_n, \rho_n)=(\alpha_m,0, \rho_m')$ for every $n\geq
m$.
\end{definition}

\begin{remark}
Consider the quiver $Q$ in Figure \ref{Sphere4}, which is associated
to a triangulation of a sphere with 4 punctures. Observe that the
cyclic sequence of any oriented chordless cycle in $Q$ is infinite,
because there are exactly two anti-parallel paths to each arrow of
$Q$.

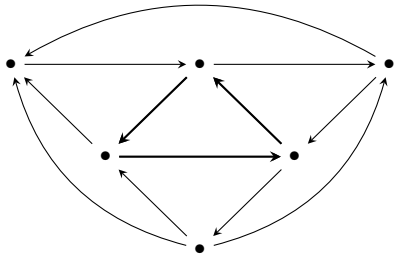
\begin{figure}[ht]
\begin{center}
\begin{tikzpicture}
\matrix (m)[matrix of math nodes, row sep=3em,column sep=3em]
    { \bullet& & \bullet& & \bullet\\
       &\bullet&  &\bullet&  \\
       & &\bullet & &\\};
\path[-stealth]
(m-1-1) edge (m-1-3)
(m-1-3) edge (m-1-5) edge  (m-2-2)
(m-2-2) edge (m-2-4) edge (m-1-1)
(m-2-4) edge (m-3-3) edge (m-1-3)
(m-1-5) edge (m-2-4)
(m-3-3) edge (m-2-2)
(m-1-5) edge[bend right] (m-1-1)
(m-3-3) edge[bend left] (m-1-1)
(m-3-3) edge[bend right] (m-1-5);
\path[-stealth, thick]
(m-1-3) edge (m-2-2)
(m-2-2) edge (m-2-4)
(m-2-4) edge (m-1-3);
\end{tikzpicture}
\end{center}
\caption{Quiver associated to an sphere with 4 punctures}
\label{Sphere4}
\end{figure}


\end{remark}

In the following theorem, we give combinatorial conditions of
the quiver such that the Jacobian algebra with a primitive potential is a finite dimensional algebra.

\begin{theorem}\label{TEO}
Let $Q$ be a cyclically oriented quiver such that:
\begin{enumerate}[i)]
\item for any arrow $\alpha$ there are at most 2 anti-parallel shortest path to $\alpha$;
\item the cyclic sequence of any oriented chordless cyclic $c$ is finite.
\end{enumerate}
If $W$ is a primitive potential of $Q$, then the Jacobian algebra $\mathcal P(Q,W)$ is a finite dimensional algebra.
\end{theorem}

\begin{proof}[Proof of Theorem \ref{TEO}]
Let $$W=\sum_{\stackrel{c}{\text{min. cycle}}} x_cc$$ be a primite
potential. It is enough to show that any non-intersecting oriented
cycle $c$ is zero in $\cP(Q,W)$. To fix notation denote by
$$c=(x_1\stackrel{\be_0}{\to}x_2\stackrel{\be_1}{\to}\dots x_{L-1}\stackrel{\be_{L}}{\to}x_1).$$

Suppose $c$ is chordless, then by hypothesis the cyclic sequence
$\{(\al_n, \rho_n, \rho_n')\}_{n\in \mathbb N\cup \{0\}}$ of $c$ is
finite. Let $m\in\mathbb N$ be the minimal number such that
$(\alpha_n,\rho_n, \rho_n')=(\alpha_m, 0,\rho_m')$ for every $n\geq
m$. Then we have
$$\partial_{\alpha_n}(W)=x_{c_n}\rho_n+x_{c_{n-1}}\rho_n'$$ for
every $n=1, \dots, m-1$ and
$$\partial_{\alpha_m}(W)=x_{c_{m-1}}\rho_m'$$
because there is only one anti-parallel shortest path to $\alpha_m$.

By construction of the cyclic sequence of $c$ we have that
$\rho_0'=\beta_1\ldots\beta_L$ and $\rho_0$ are anti-parallel
shortest paths to $\alpha_0$, then
$\beta_1\ldots\beta_L=-\frac{x_{c_0}}{x_{c}}\rho_0$ in $\cP(Q,W)$,
therefore,
\begin{eqnarray}\label{ciclo}
c &=& \alpha_0\beta_1 \dots \beta_L \nonumber \\
  &=& -\frac{x_{c_0}}{x_c}\alpha_0\rho_0
\end{eqnarray}
Recall that $\rho_1'$ is the anti-parallel shortest path of
$\alpha_1$ in the cycle $\alpha_0\rho_0$, then
$$c=-\frac{x_{c_0}}{x_c}\rho_1'\alpha_1.$$

Repeating this process for each triple of sequence we have

\begin{eqnarray}\label{ciclo1}
c &=& \left(-\frac{x_{c_0}}{x_c}\right)\left(-\frac{x_{c_1}}{x_{c_0}}\right)\rho_1\alpha_1= \frac{x_{c_1}}{x_{c}}\rho_1\alpha_1  \nonumber\\
&=& \frac{x_{c_1}}{x_{c}}\alpha_2\rho_2'\nonumber\\
&=& \left(\frac{x_{c_1}}{x_{c}}\right)\left(-\frac{x_{c_2}}{x_{c_1}}\right)\alpha_2\rho_2= -\frac{x_{c_2}}{x_{c}} \alpha_2\rho_2 \nonumber\\
&=& -\frac{x_{c_2}}{x_{c}}\rho_3'\alpha_3 \nonumber\\
&\vdots& \nonumber\\
&=& \begin{cases}
\frac{x_{c_{m-1}}}{x_{c}}\rho_{m-1}\al_{m-1} & \text{if $m-1$ is odd}\\
-\frac{x_{c_{m-1}}}{x_{c}}\alpha_{m-1}\rho_{m-1} & \text{if $m-1$ is even}
\end{cases}
\end{eqnarray}

Since $\rho_m'$ is the anti-parallel shortest path to $\alpha_m$ in
the cycle $\rho_{m-1}\alpha_{m-1}$ or $\alpha_{m-1}\rho_{m-1}$, the
expression \ref{ciclo1} can be rewritten in the following way:

\begin{equation}
c= \begin{cases}
\frac{x_{c_{m-1}}}{x_{c}}\alpha_{m}\rho'_{m} & \text{if $m-1$ is odd}\\
-\frac{x_{c_{m-1}}}{x_{c}}
\rho'_{m}\al_{m} & \text{if $m-1$ is even}
\end{cases}
\end{equation}

Then $c=0$ in $\cP(Q,W)$ because $\rho_l'$ is in the Jacobian ideal $J(W)$. Therefore any oriented chordless cycle is zero in $\cP(Q,W)$.

Suppose $c$ is non-chordless, then there exists a chord
$\be_1:x_i\to x_j$ with vertex in $c$ such that
$c_1=\gamma_1\be_{j+1} \dots\be_i$ is a oriented chordless cycle.
Consider $j$ the minimal number of the subset $\{0,1,2, \dots,
L-1\}$ such that  $c_1$ is a oriented chordless cycle and there is
not a chord in the path $\be_1 \ldots \be_j$. Denote by  $i_0=j$,
$i_1=i$ and by $\tilde{c_1}$ the walk which is obtained by replacing
the path $\be_{i_0+1} \dots \be_{i_1}$ by the arrow $\gamma_1$ in
the cycle $c$, namely,
$$\tilde{c_1}=(x_0\stackrel{\be_1}{\to}x_1\stackrel{\be_2}{\to}\dots
\stackrel{\be_{i_0}}{\to}x_{i_0}\stackrel{\mathbf{\gamma_1}}{\mathbf{\longleftarrow}}x_{i_1}\stackrel{\be_{i_1+1}}{\longrightarrow}\dots
x_{n-1}\stackrel{\be_{n}}{\to}x_0).$$

Since $\tilde{c_1}$ is a non-oriented cycle, then $\tilde{c_1}$ is
non-chordless, because $Q$ is a cyclically oriented quiver, then
there exists a chord $\gamma_2: x_{i_3}\to x_{i_2}$ such that
$c_2=\gamma_2\be_{i_2+1}\dots \be_{i_3}$ is oriented chordless cycle
and there is not a chord in the path $\beta_{i_1+1}\ldots
\beta_{i_2}$.

Let $\tilde{c_2}$ be the walk which is obtained by replacing the
path $\be_{i_2+1} \dots \be_{i_3}$ by the arrow $\gamma_2$ in the
walk $\tilde{c_1}$, namely,
$$\tilde{c_2}=(x_0\stackrel{\be_1}{\to}x_2\dots
\stackrel{\be_{i_0}}{\to}x_{i_0}\stackrel{\gamma_1}{\longleftarrow}x_{i_1}\stackrel{\be_{i_1+1}}{\longrightarrow}
\dots\stackrel{\be_{i_2}}{\to}x_{i_2}\stackrel{\gamma_2}{\longleftarrow}x_{i_3}\stackrel{\be_{i_3+1}}{\longrightarrow}
\dots x_{n-1}\stackrel{\be_{n}}{\to}x_1)$$ which is again not
oriented and therefore not chordless, then exists a chord
$\gamma_3:x_{i_5}\to x_{i_4}$, with the same properties of the
arrows $\gamma_1$ and $\gamma_2$, and a oriented chordless cycle
$c_3=\gamma_3\beta_{i_3+1}\ldots\beta_{i_4}$ and a walk
$\tilde{c_3}$.

Observe that the vertex of the arrows $\gamma_i$ are elements of an
increasingly smaller subset of $\{0, 1, \dots, L-1\}$, then there is
a natural number $r$ such that $\tilde{c_r}$ is oriented chordless
cycle and in particular $s(\gamma_i)=t(\gamma_{i+1})$ for every
$i=1, \dots, r$.

Then $$\partial_{\gamma_2}(W)= x_{c_1}\beta_{i_2+1} \dots
\beta_{i_3} + x_{c_2}\gamma_1\gamma_k\ldots\gamma_3$$ where $x_c,
x_c'\in k$, therefore $c$ can be rewritten as the following
\begin{equation}\label{ciclo2}-\frac{x_{c_2}}{x_{c_1}}\beta_1
\dots\be_{i_0}\gamma_1\gamma_k\ldots\gamma_3\beta_{i_3+1}\ldots\beta_{i_4}\ldots
\beta_n=-\frac{x_{c_2}}{x_{c_1}}\beta_1
\dots\be_{i_0}\gamma_1\gamma_k\ldots c_3\ldots \beta_n\end{equation}

Then $c$ is a zero path, because $c_3$ is oriented chordless cycle.

\end{proof}

\section*{Acknowledgments}

The second author thanks Professor Michael Barot for discussing some central ideas for this article. She thanks Professor Chrisof Geiss for pointing out some important results on surfaces with non-empty boundary.  She also thanks Daniel
Labardini-Fragoso for clarifying ideas of his article \cite{LF09}.
We thank Daniel Labardini-Fragoso for his helpful comments and
suggestions given in a preliminary version of this article. The author was partially supported by a CONICET doctoral fellowship.

\newcommand{\etalchar}[1]{$^{#1}$}


\begin{thebibliography}{BMR{\etalchar{+}}06}

\bibitem[Ami09]{Ami09}
Claire Amiot.
\newblock Cluster categories for algebras of global dimension 2 and quivers
  with potential.
\newblock {\em Ann. Inst. Fourier (Grenoble)}, 59\penalty0 (6):\penalty0
  2525--2590, 2009.

\bibitem[BG12]{BG09}
Michael Barot and Christof Geiss.
\newblock Tubular cluster algebras {I}: categorification.
\newblock {\em Math. Z.}, 271\penalty0 (3-4):\penalty0 1091--1115, 2012.

\bibitem[BMR{\etalchar{+}}06]{BMRRT06}
Aslak~Bakke Buan, Robert Marsh, Markus Reineke, Idun Reiten, and Gordana
  Todorov.
\newblock Tilting theory and cluster combinatorics.
\newblock {\em Adv. Math.}, 204\penalty0 (2):\penalty0 572--618, 2006.

\bibitem[BT13]{BT10}
Michael Barot and Sonia Trepode.
\newblock Cluster tilted algebras with a cyclically oriented quiver.
\newblock {\em Comm. Algebra}, 41\penalty0 (10):\penalty0 3613--3628, 2013.

\bibitem[DWZ08]{DWZ08}
Harm Derksen, Jerzy Weyman, and Andrei Zelevinsky.
\newblock Quivers with potentials and their representations. {I}. {M}utations.
\newblock {\em Selecta Math. (N.S.)}, 14\penalty0 (1):\penalty0 59--119, 2008.

\bibitem[FST]{FST08}
Sergey Fomin, Michael Shapiro, and Dylan Thurston.
\newblock Cluster algebras and triangulated surfaces. {I}. {C}luster complexes.
\newblock {\em Acta Math.}

\bibitem[Kel11]{Ke2011}
Bernhard Keller.
\newblock Deformed {C}alabi-{Y}au completions.
\newblock {\em J. Reine Angew. Math.}, 654:\penalty0 125--180, 2011.
\newblock With an appendix by Michel Van den Bergh.

\bibitem[Lad]{Lad12}
Sefi Ladkani.
\newblock On \textsc{J}acobian algebras from closed surfaces.
\newblock arXiv:1207.3778.

\bibitem[LF09]{LF09}
Daniel Labardini-Fragoso.
\newblock Quivers with potentials associated to triangulated surfaces.
\newblock {\em Proc. Lond. Math. Soc. (3)}, 98\penalty0 (3):\penalty0 797--839,
  2009.

\end{thebibliography}

\end{document}